\documentclass[11pt]{amsart}

\usepackage[OT2,T1]{fontenc}
\DeclareSymbolFont{cyrletters}{OT2}{wncyr}{m}{n}
\DeclareMathSymbol{\Sha}{\mathalpha}{cyrletters}{"58}
\usepackage[utf8]{inputenc}
\usepackage{musicography}
\usepackage{amsmath}

\usepackage{array}
\usepackage{accents}

\usepackage{tikz-cd}
\usepackage{hyperref}
\usepackage{verbatim}
\usepackage{amssymb}
\usepackage{textcomp}
\usepackage{amsthm}
\usepackage{amsfonts}
\usepackage[all]{xy}
\usepackage{mathrsfs}
 \usepackage{graphicx}
 \usepackage{epstopdf}
 \usepackage{float}
 \usepackage{graphicx}
 \usepackage[T1]{fontenc}
\newtheorem{theo}{Th\'eor\'eme}[section]
\usepackage[ddmmyyyy]{datetime}

\setlength{\tabcolsep}{12pt}



\newcolumntype{L}[1]{>{\raggedright\arraybackslash}m{#1}}


\def\line{\overline}

\def\remk{\noindent\textit{Remark:~}}
\numberwithin{equation}{section}

\newtheorem{def-prop}[theo]{Definition-Proposition}

\newtheorem{lemma}[theo]{Lemma}
\newtheorem{teo}[theo]{Theorem}
\newtheorem{definition}[theo]{Definition}

\newtheorem{remark}[theo]{Remark}

\newtheorem{assumption}[theo]{Assumption}

\def\C{\mathbb{C}}

\def\G{\mathbb{G}}

\def\N{\mathbb{N}}

\def\F{\mathbb{F}}

\def\Q{\mathbb{Q}}

\def\Z{\mathbb{Z}}

\def\kg{\mathfrak{g}}

\def\h{\mathfrak{h}}

\def\k{\mathrm{k}}

  \newcommand{\cC}{{\mathcal C}}
    
  \newcommand{\cD}{{\mathcal D}}

 \def\cO{\mathcal{O}}

\newcommand{\cT}{{\mathcal T}}

\newcommand{\sO}{{\mathscr O}}

\newcommand{\sS}{{\mathscr S}}

 \def\vol {\mathop{\mathrm{vol}}\nolimits}

 \def\ad {\mathop{\mathrm{ad}}\nolimits}

  \def\Orb {\mathop{\mathrm{Orb}}\nolimits}

 \def\v {\mathop{\mathrm{v}}\nolimits}

\def\ker{\mathop{\mathrm{ker}}\nolimits}

\def\dim{\mathop{\mathrm{dim}}\nolimits}
\def\Tr{\mathop{\mathrm{Tr}}\nolimits}

\def\rank{\mathop{\mathrm{rank}}\nolimits}

\def\im{\mathop{\mathrm{Im}}\nolimits}

\def\mod{\mathop{\mathrm{mod}}\nolimits}

\def\GL{\mathop{\mathrm{GL}}\nolimits}
 
 \def\SL {\mathop{\mathrm{SL}}\nolimits}
 
 \def\sl{\mathop{\mathrm{sl}}\nolimits}
\def\gl{\mathop{\mathrm{gl}}\nolimits}

\def\Aut{\mathop{\mathrm{Aut}}\nolimits}

\def\Id{\mathop{\mathrm{Id}}\nolimits}
\def\hat{\widehat}
\def\Ad{\mathop{\mathrm{Ad}}\nolimits}

\allowdisplaybreaks

\everymath{\displaystyle}

\newlength{\dhatheight}

\makeatletter
\renewcommand\section{\@startsection{section}{1}{\z@}%
                       {-18\p@ \@plus -4\p@ \@minus -4\p@}%
                       {12\p@ \@plus 4\p@ \@minus 4\p@}%
                       {\normalfont\large\bfseries\boldmath
                        \rightskip=\z@ \@plus 8em\pretolerance=10000 }}
\renewcommand\subsection{\@startsection{subsection}{2}{\z@}%
                       {-18\p@ \@plus -4\p@ \@minus -4\p@}%
                       {8\p@ \@plus 4\p@ \@minus 4\p@}%
                       {\normalfont\normalsize\bfseries\boldmath
                        \rightskip=\z@ \@plus 8em\pretolerance=10000 }}
\renewcommand\subsubsection{\@startsection{subsubsection}{3}{\z@}%
                       {-18\p@ \@plus -4\p@ \@minus -4\p@}%
                       {4\p@ \@plus 2\p@ \@minus 2\p@}%
                       {\normalfont\normalsize\bfseries\boldmath
                        \rightskip=\z@ \@plus 8em\pretolerance=10000 }}
\renewcommand\paragraph{\@startsection{paragraph}{4}{\z@}%
                       {-12\p@ \@plus -4\p@ \@minus -4\p@}%
                       {2\p@ \@plus 1\p@ \@minus 1\p@}%
                       {\normalfont\normalsize\itshape
                        \rightskip=\z@ \@plus 8em\pretolerance=10000 }}
\makeatother




\let\op\operatorname


\input xy
\xyoption{all}

\setlength{\parindent}{0pt}

\title{On generalization of a theorem of Harish-Chandra}
\author{Taiwang DENG}
\address{ 
Yanqi Lake Beijing Institute of Mathematical Sciences and Applications (BIMSA), Huairou District, 100084, Beijing, China.}
\email{dengtaiw@bimsa.cn}
\date{}

\keywords{Fourier transforms, Orbital integrals,  Orbital Gauss sum, Harish-Chandra theorem}

\begin{document}

\maketitle

\begin{abstract}
In this paper under some conditions we generalize a theorem of Harish-Chandra concerning representability of Fourier transforms of 
orbital integrals.
\end{abstract}

\section{Introduction}
In this short paper we consider the question of representability of Fourier transform of orbital integrals for general representations of a reductive groups $G$.
In the case $\rho: G\rightarrow \Aut(\kg)$ is the adjoint representation, let
\[
\hat{\Orb_G}(X, f):=\Orb_G(X, \hat{f}), f\in \cC_c^\infty(\kg), X\in \kg,
\]
where $\Orb_G(X, f)$ is the orbital integral of $f$ along  the $G$-orbit of $X$, then Harish-Chandra \cite[Theorem 1.1]{De99} shows that the distribution
is representable by a kernel function $\kappa(X, \cdot)$. He approaches the problem by local trace formula and a finiteness theorem due to Howe.
In \cite[\S 4]{Zhang12}, Zhang raises the question whether analogues theorem holds for the general representations of $G$. Note
that Howe's finiteness theorem is known to fail in general (see loc.cit. for counter examples). In this paper, we answer this question 
affirmatively under some mild conditions. Our main result is Theorem \ref{teo-main-results-kernel-function}. After we finish the proof, 
we notice that a particular case when $G=\SL_2$ and $V=\sl_2$ the adjoint representation is studied in \cite{Eve98}, where the author applies the same method to obtain almost the same result.

Let $F$ be a finite extension of $\Q_p$.
Let $\cO_F$ be the ring of integers of $F$ and $\pi$ be its uniformizer. Let $\k_F$ be its residue field.
We fix a valuation $\v_{\pi}$ on $F$ with $\v_{\pi}(\pi)=1$.
Let us also fix an additive character $\psi$ of $F$ satisfying
\[
\psi|_{\cO_F}=1, \quad \sum_{a\in \pi^{-1}\cO_F/\cO_F}\psi(a)=0.
\]
Assume that $G$ is a quasi-split reductive group defined over  $\cO_F$.
In general, $G\otimes F$ may have different models over 
$\cO_F$, and the reductive models depend on a choice of a special vertex on the Bruhat-Tits building of $G\otimes F$. Note that
it follows from our assumption that the group $G\otimes \k_F$ is also reductive.

Let us fix the notations. Assume that $(\rho, V)$ is an algebraic representation of a reductive group $G$ over 
 $\cO_F$. We identify $\G_m$ as the center of $\Aut(V)$. 
 
We assume that  $G$ is a reductive subgroup such that $G\cap \G_m$ is a finite subgroup 
and denote $H=G\G_m$.
 Finally we denote by $\h$ the Lie algebra of $H$ and $\kg$ the Lie algebra of $G$.

According to \cite[\S 3 and \S 7]{Pra15}, there exists an involution $\theta$ of $G$
defined over $F$(called Chevalley involution) which interchanges an irreducible and its contragredient.

The existence of the Chevalley involution allows us to construct a non-degenerate symmetric pairing on $V_F=V\otimes F$
(without confusion we identify $V_F$ with its $F$-rational points $V(F)$)
\[
\langle ,\rangle: V_F\otimes V_F\rightarrow F
\]
satisfying
\[
\langle gX , \theta(g)Y\rangle=\langle X , Y\rangle, \quad \forall X, Y\in V(F), g\in G(F).
\]
We also note that $\theta$ can be extended to an involution on $H$.
\begin{definition}\label{def-valuation-norm}
We fix a basis $\{e_1, \cdots, e_d\}$ of $V(\cO_F)$. Define a valuation $\v_{\pi}$ on $V$ by letting
\[
\v_{\pi}(\sum_i x_i e_i)=\min\{\v_{\pi}(x_i): i=1, \cdots, d\}.
\]
We also define the norm $|Y|=|\k_F|^{-\v_{\pi}(Y)}$.
For any subspace $V_1$ of $V(F)$,  define
\[
d_{\langle ,\rangle}(V_1)=\min\{\v_{\pi}(Y)| Y\in V_1, \langle Z, Y \rangle\in \cO_F, \forall Z\in V(\cO_F)\}.
\]
Finally, define the depth of $\langle ,\rangle$ by
\[
d_{\langle ,\rangle}:=\max\{d_{\langle ,\rangle}(V_1)| V_1\subseteq V\}.
\]
\end{definition}
It follows immediately that 
\begin{lemma}
The valuation $\v_{\pi}$ is independent of the choice of the basis of $V(\cO_F)$.
\end{lemma}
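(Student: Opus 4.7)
The plan is to exploit the fact that any two $\cO_F$-bases of $V(\cO_F)$ are related by an element of $\GL_d(\cO_F)$, so the change of basis is an $\cO_F$-linear isomorphism, and then invoke the ultrametric inequality.

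More concretely, let $\{e_1,\dots,e_d\}$ and $\{e_1',\dots,e_d'\}$ be two $\cO_F$-bases of $V(\cO_F)$, and let $\nu_{\pi_F}$ and $\nu_{\pi_F}'$ denote the two resulting valuations on $V(F)$. Write $e_j = \sum_i a_{ij} e_i'$ with $A = (a_{ij}) \in \GL_d(\cO_F)$; in particular $a_{ij}\in\cO_F$ and the inverse matrix also has entries in $\cO_F$. For any $Y = \sum_j x_j e_j \in V(F)$, expanding in the primed basis gives $Y = \sum_i \bigl(\sum_j a_{ij} x_j\bigr) e_i'$.

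Using $\nu_{\pi_F}(a_{ij})\geq 0$ together with the ultrametric inequality on $F$, we get
\[
\nu_{\pi_F}'(Y) = \min_i \nu_{\pi_F}\Bigl(\sum_j a_{ij} x_j\Bigr) \geq \min_{i,j}\bigl(\nu_{\pi_F}(a_{ij}) + \nu_{\pi_F}(x_j)\bigr) \geq \min_j \nu_{\pi_F}(x_j) = \nu_{\pi_F}(Y).
\]
Applying the same reasoning with $A^{-1}$ in place of $A$ yields the reverse inequality $\nu_{\pi_F}(Y) \geq \nu_{\pi_F}'(Y)$, hence the two valuations coincide on all of $V(F)$.

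The argument is entirely formal once one notes that $A\in\GL_d(\cO_F)$; there is no real obstacle. The only thing worth being slightly careful about is to avoid implicitly assuming $Y\in V(\cO_F)$: the $x_j$ are arbitrary elements of $F$, and the ultrametric inequality is what ensures that coefficients of valuation $\geq 0$ cannot raise nor lower the minimum.
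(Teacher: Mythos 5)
Your proof is correct. The paper gives no argument at all for this lemma (it is stated as following "immediately"), and your change-of-basis plus ultrametric argument via $\GL_d(\cO_F)$ is precisely the standard verification being elided.
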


\begin{lemma}\label{lemma-depth-imply-integrality}
For any $Y\in V(F)$ with $\v_{\pi}(Y)\geq d_{\langle, \rangle}$, we have
\[
 \langle Z, Y \rangle\in \cO_F, \forall Z\in V(\cO_F).
\]
\end{lemma}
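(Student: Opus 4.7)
The plan is to exploit the fact that $d_{\langle,\rangle}$ is defined as a maximum over \emph{all} subspaces $V_1 \subseteq V$, by specializing to a cleverly chosen one-dimensional subspace and then rescaling. The case $Y = 0$ is trivial, so I assume $Y \neq 0$ and take $V_1 := FY$.

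First I would produce a witness: an element $Y' \in V_1$ realizing the minimum defining $d_{\langle,\rangle}(V_1)$, that is, with $\nu_{\pi_F}(Y') = d_{\langle,\rangle}(V_1)$ and $\langle Z, Y' \rangle \in \cO_F$ for all $Z \in V(\cO_F)$. The set of candidates is nonempty, since $\pi_F^N Y$ lies in it for all sufficiently large $N$ (the finitely many pairings $\langle e_i, Y \rangle$ against the basis vectors $e_i$ have a common lower bound on valuation, so multiplying $Y$ by a high enough power of $\pi_F$ pushes every such pairing into $\cO_F$). Within this set the valuation $\nu_{\pi_F}$ takes integer values bounded below (by the same finite-basis argument), so the infimum is attained.

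Writing $Y' = cY$ with $c \in F^\times$, which is possible because $V_1 = FY$, the definition of $d_{\langle,\rangle}$ as a supremum together with the hypothesis $\nu_{\pi_F}(Y) \geq d_{\langle,\rangle}$ yields
\[
\nu_{\pi_F}(c) = \nu_{\pi_F}(Y') - \nu_{\pi_F}(Y) = d_{\langle,\rangle}(V_1) - \nu_{\pi_F}(Y) \leq d_{\langle,\rangle} - \nu_{\pi_F}(Y) \leq 0,
\]
so $c^{-1} \in \cO_F$. Since $Y = c^{-1} Y'$, for any $Z \in V(\cO_F)$ we obtain $\langle Z, Y \rangle = c^{-1} \langle Z, Y' \rangle \in \cO_F \cdot \cO_F \subseteq \cO_F$, as required.

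The argument is essentially an unfolding of the definition; the only step demanding any care is the attainment of the minimum defining $d_{\langle,\rangle}(V_1)$, and this I expect to be the main (mild) obstacle.
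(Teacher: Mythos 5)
Your proof is correct and uses the same key idea as the paper — specializing to the one-dimensional subspace $V_1 = FY$ and invoking the definition of $d_{\langle,\rangle}$ as a maximum over subspaces. The only difference is presentational: the paper runs a quick contradiction argument (assume $\langle Z, Y\rangle \notin \cO_F$, conclude $d_{\langle,\rangle}(FY) \geq \nu_{\pi_F}(Y) \geq d_{\langle,\rangle}$, declare this absurd), while you unfold the definitions directly by producing a witness $Y' = cY$ realizing $d_{\langle,\rangle}(FY)$ and rescaling by $c^{-1} \in \cO_F$. Your version is a bit more careful — it explicitly verifies that the minimum defining $d_{\langle,\rangle}(V_1)$ is attained (nonemptiness via $\pi_F^N Y$, boundedness below via nondegeneracy), and it avoids the slight imprecision in the paper's chain of non-strict inequalities, which as written would need a strict inequality somewhere to yield a genuine contradiction.
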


\begin{proof}
In fact, let $Y\in V(F)$ with $\v_{\pi}(Y)\geq d_{\langle, \rangle}$ and $Z\in V(\cO_F)$ such that 
\[
 \langle Z, Y \rangle\notin \cO_F.
\]
Then take $V_1$ to be the subspace generated by $Y$. Then $d_{\langle ,\rangle}(V_1)\geq \v_{\pi}(Y)\geq d_{\langle, \rangle}$, 
which is absurd. 
\end{proof}

\begin{assumption}\label{ass-choice-depth-pairing}
We assume that our choice of $\langle ,\rangle$ is of depth zero.
\end{assumption}

\begin{remark}  In general if both $G$ and $V$ are defined over $\Z$, then for fixed $\langle-,-\rangle$, after excluding finitely many primes the above assumption
is always satisfied. In case $G=\SL_2$ and $V=\sl_2$ is the adjoint representation, we can take 
\[
\langle U, X\rangle: = 2 ux+vz+wy, \quad X=\begin{pmatrix} x&y\\ z&-x\end{pmatrix}, U=\begin{pmatrix} u&v\\ w&-u\end{pmatrix}.
\]
In this case, only $p=2$ violates the above assumption.
\end{remark} 

Let $\cC_c^{\infty}(V)$ be the space of locally constant functions on $V(F)$ with compact support.  For $Y\in V(F)$ and $f\in \cC_c^\infty(V)$, we define the
Fourier transform of $f$ by
\[
\hat{f}(Y)=\int_{V(F)} f(X)\psi(\langle X, Y \rangle)dX.
\]
We consider the orbital integrals of $f$ on $V(F)$ to be a linear functional on $\cC_c^\infty(V)$:
\begin{equation}\label{eqn-definition-orbital-integral}
\Orb_G(X, f):=\int_{\sO_G(X)}f(Y)dY,
\end{equation}
where $\sO_G(X)$ denote the $G$-orbit of $X$ in $V(F)$.
We denote by $\sS(V)$ the linear functional on $\cC_c^\infty(V)$ generated by the orbital integrals.
We are interested in the question of representability of the orbital integral $\Orb_G(X, \hat{f})$, cf. \cite[\S 3]{Zhang12}.

In general, it is convenient to introduce a variant of the orbital integrals, especially when we deal with the orbital integrals of $H$. 
We assume that there exists a non-constant H-quasi-invariant $P(X)$  in the polynomial ring $F[V]$  
such that the set 
\[
V_P:=\{X\in V(F)| P(X)\neq 0\}
\]
is open and dense in $V(F)$( in the $p$-adic topology). By assumption, we have
\begin{equation}\label{eqn-disc-equation}
(g.P)(X)=P(g^{-1}X)=\nu^{-1}(g)P(X), \forall g\in H, 
\end{equation}
for some algebraic character $\nu: H\rightarrow \G_m$. Let $\nu_s(g)=|\nu(g)|^s$.

Let $\chi: F^\times \rightarrow \C^\times$ be a (unitary) character. The orbit $\sO_H(X)$ of $X\in V(F)$ admits an action of $F^\times$ identifying with the center of $\GL(V)$, which commutes with 
the action of $G$.
We modify our definition of orbital integral to be 
\begin{align}
\Orb_G(X, f, \nu_s):&=\int_{\sO_G(X)}f(Y)|P(Y)|^s dY \label{eqn-orbital-integral-definition-modification_1},\\
\Orb_H(X, f, \chi\nu_s):&=\int_{F^\times} \chi(t) \Orb_G(tX, f, \nu_s) d^\times t\label{eqn-orbital-integral-definition-modification_2},
\end{align}
where $dY$ is a quasi-invariant algebraic measure on $\sO_G(X)$ coming from the algebraic volume form such that 
\[
d(t Y)= \delta( t) dY, t\in F^\times
\]
with $\delta$ a (quasi) character of $F^\times$.
 Observe that when $\chi=1$, the term $\Orb_H(X, f, \chi\nu_s)$ can be rewritten as
\begin{equation}
\Orb_H(X, f, \nu_s)=\int_{\sO_H(X)}f(Y)|P(Y)|^s dY, 
\end{equation}
which is compatible with (\ref{eqn-orbital-integral-definition-modification_1}). 

Let $\cO_H^{\op{Zar}}$ be a geometric orbit of $H$ in $V$ which is defined over $F$. It is an affine variety since $H$ is reductive.

We impose some assumptions on the orbit $\cO_G(X)$.
\begin{assumption}\label{ass-compact-condition}
 We assume that $\cO_G(X)\subseteq V_P$ is closed.
\end{assumption}

Note that we do not ask the orbit $\cO_G(X)$ to be closed in $V$ since it will rule out our example of prehomogeneous space (in which we have $\cO_G(X)=V_P$).
This assumption is crucial for certain compactness result (cf. Lemma \ref{lem-compactness-result}).

Put $q=|\k_F|$. For $n\in \Z$, set $W_n(X)=\pi^{-n}V(\cO_F)\cap \sO_G(X)$.
\begin{assumption}\label{ass-regularity-condition}
 We assume that for any $\epsilon>0$ and $n\in \Z$,
\[
\int_{\cO_H(X)}1_{\{Y\in  W_n(X):  |P(Y)|>\epsilon \}}(Y)dY<\infty.
\]
\end{assumption}

Note that this assumption excludes the case of taking $P(Y)$ to be the constant function in general.
This leads to the following lemma

\begin{lemma}\label{lem-rationality-p-adic-integral}
Under the assumption \ref{ass-regularity-condition}
the integral (\ref{eqn-orbital-integral-definition-modification_1})  converges for $\op{Re}{s}\gg 0$.  Moreover, there exists a meromorphic function
\[
M(s)=\frac{Q(q^{-s}, q^s)}{(1- q^{-a_1-b_1s})^{n_1}(1- q^{-a_2-b_2s})^{n_2}\cdots (1- q^{-a_r-b_rs})^{n_r}}
\]
on $\C$  and
\begin{itemize}
\item $r\in \N$, $ (a_1, b_1), (a_2,b_2), \cdots, (a_r, b_r)$ are pairwise distinct integers such that $b_i\geq 0 (i=1,\cdots, r)$ and $b_i>0$ if $a_i<0$;
\item the $b_i$'s depend only on $P(X)$;
\item $n_1, \cdots, n_r\in \{ 1, 2, \cdots, \dim(\cO_G(X))\}$;
\item $Q$ is a two variable polynomial with coefficients in $\C$,
\end{itemize}
such that if the integral (\ref{eqn-orbital-integral-definition-modification_1}) is absolutely convergent at $s=s_0$, then $M(s)$ is holomorphic at $s=s_0$ and 
\[
\Orb_G(X, f, \nu_{s_0})=M(s_0).
\]
\end{lemma}

A version of this lemma is the main result of \cite[Theorem 1.1]{Denef84} obtaining by $p$-adic  cell decomposition. Later a new proof is given in \cite[Corollary 15]{CL08} using $p$-adic
integration theory which rely on a partition of semi-algebraic sets into simple pieces(cf. \cite[Theorem 7]{CL08}). The above version we stated is a special case of \cite[Theorem 5.13]{HS17} (with comparison to  \cite[Corollary 15]{CL08} and \cite[Theorem 1.3]{CD08} ).

As for the rationality of $p$-adic orbital integrals,  \cite[Theorem 1.3]{CD08} proves a general version of it for arbitrary linear algebraic group under some complicated conditions.
In special case of $G=\GL_n$ with adjoint representation, Yun \cite[Corollary 4.6]{Yun13} deduce the rationality of $p$-adic orbital integrals by connecting it to counting lattices, which is quite amenable for generalizations. We hope to come back to this point in future work.

\begin{remark} The assumption is satisfied for the case of prehomogeneous space \cite[Lemma 2]{Igu84} as well as the case of $(\rho, V)=(\Ad, \frak{g})$ the adjoint representation.
Here the second case follows from the first as is explained in \cite[\S 3]{AM96}. 
\end{remark}

\vspace{.1cm}

Our main result in this paper is the following

\begin{teo}\label{teo-main-results-kernel-function}
Assume the hypothesis \ref{ass-choice-depth-pairing} and \ref{ass-regularity-condition}.
There exists a locally constant  function $\kappa_G(\nu_s, X, \cdot)$ 
on $V(F)$ given by the principal value integral
\begin{equation}\label{eqn-kernel-function}
\int_{\sO_G(X)}|P(Z)|^s\psi(\langle Z, Y \rangle) d Z, \quad \op{Re}(s)\gg 0.
\end{equation}
It
is a rational function in $q^{-s}$ with order of poles bounded by constant depending only on $P(X)$ and $\dim(\cO_G(X))$.
When $s=s_0$ is not its pole, then the function $\kappa_G(\nu_{s_0}, X, \cdot)$  is locally integrable. 
If furthermore $s=s_0$ is also not the pole of $\Orb_G(X, f, \nu_s)$, then
we have for $f\in \cC_c^{\infty}(V)$,
\begin{align}\label{teo-main-equality-Fourier}
\Orb_G(X, \hat{f}, \nu_{s_0})=\int_{V(F)} f(Y)\kappa_G(\nu_{s_0}, X, Y)dY.
\end{align}
\end{teo}

\begin{remark} 
When $\Orb_G(X, f, \nu_s)$ has no poles at $s=0$,
letting $s=0$ we recover the usual definition (\ref{eqn-definition-orbital-integral}) of orbital integrals for $G$.
\end{remark} 

In fact, we  can also derive from our main results a twisted generalization for $H$, which is the following:

\begin{teo}\label{teo-main-results-kernel-function-H}
Assume the hypothesis \ref{ass-choice-depth-pairing} and \ref{ass-regularity-condition}. There exists a locally constant function $\kappa_H(\chi\nu_s, X, \cdot)$ well defined outside a measure zero subset
on $V(F)$ given by the integral
\[
\int_{F^\times}\chi(t)\kappa_G(\nu_s, tX, \cdot) d^\times t
\]

For $f\in \cC_c^{\infty}(V)$,
\begin{align}\label{teo-main-equality-Fourier-H}
\Orb_H(X, \hat{f}, \chi\nu_s)=\int_{V(F)} f(Y)\kappa_H(\chi \nu_s, X, Y)dY.
\end{align}

\end{teo}

Our theorem \ref{teo-main-results-kernel-function} implies Harish-Chandra’s original theorem, which corresponds to the case
when $G$ is semi-simple and $(\rho, V)$ is the adjoint representation. See the discussion after the proof of the theorem.

\begin{remark}
When $V_F$ contains only finitely many open $H$-orbits $\{\sO_1, \cdots, \sO_r\}$, then the above theorem gives
\[
\Orb_H(\sO_i, \hat{f}, \nu_s)=\sum_{j} \Gamma_{ij}(\nu_s) \Orb_H(\sO_j, f, \theta(\delta^{-1} \nu_{s}))
\]
where $\delta: H(F) \rightarrow \C^\times$ via $d(h_1\line{h})=\delta(h_1)d\line{h}$ and $\theta(\delta^{-1} \nu_{s}))(h)=\delta^{-1} \nu_{s}(\theta(h))$, this is the local functional equation for local Igusa Zeta function \cite[Theorem 1]{Igu84}.
\end{remark}

\section{Proof of the Main Results}

\begin{proof}[Proof of Lemma \ref{lem-rationality-p-adic-integral}]
Consider
\[
W_n(X)=\pi^{-n}V(\cO_F)\cap \sO_G(X)
\]
which is an semi-algebraic set which is open in the semi-algebraic space $\cO_G(X)$ (This is even a Nash manifold, cf. \cite[\S 6.5]{HS17}).
We endow it with the $\delta$-invariant measure $dY$ (or the one from the volume form). When $X\in V_P$, 
we observe that the restriction of $P(X)$ to $W_n(X)$ is a nowhere vanishing and bounded semi-algebraic function, hence 
we can apply the \cite[Theorem 5.13]{HS17}. Note that our choice of measure is of order $\leq 0$ as indicated by \cite[Theorem 6.15]{HS17}.
This explains the special form of  $M(s)$ in our lemma (Compare also with \cite[Corollary]{CL08}).
\end{proof}

\remk The exponents $b_i$'s are missed in \cite[Theorem 5.13]{HS17}, which is clear according to the proof appearing after \cite[Lemma 5.17]{HS17}.

Let us consider the proof of Theorem \ref{teo-main-results-kernel-function}. 
We need some preparations.

\begin{lemma}\label{lemma-non-empty-open-subset}
For $Z_0\in V(F)\backslash \{0\}$, let $V_0$ be the $\cO_F$-module generated by $H(\cO_F).Z_0$. Let 
\[
d_{V_0}=\min\{\v_{\pi}(Y)| Y\in V(F),  \langle Z, Y\rangle\in \cO_F,  \forall Z\in V_0 \}.
\]
Let 
\[
U_{Z_0}=\{Y\in V(F)| \psi(\frac{\langle\cdot,  \pi^{d_{V_0}-\v_{\pi}(Z_0)-\v_{\pi}(Y)}Y\rangle}{\pi}): V_0\otimes \k_F\rightarrow \C^\times \text{ is nontrivial} \}.
\]
Then $U_{Z_0}$ is an non-empty open subset of $V(F)$ whose complement is closed of measure zero.
\end{lemma}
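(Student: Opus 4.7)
My plan is to reformulate the condition defining $U_{Z_0}$ in terms of the $\cO_F$-dual lattice of $V_0$, and then handle openness, non-emptiness and the measure-zero complement in turn via a decomposition of $V(F)\setminus\{0\}$ into valuation level sets.

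First, I would introduce the full-rank $\cO_F$-lattice
\[
L := \{W \in V(F) : \langle V_0, W\rangle \subseteq \cO_F\},
\]
whose existence as a lattice uses the non-degeneracy of $\langle\cdot,\cdot\rangle$, and for which $d_{V_0} = \min\{\nu_{\pi_F}(W) : W \in L\}$ directly from the definition of $d_{V_0}$. Writing $c(Y) := d_{V_0} - \nu_{\pi_F}(Z_0) - \nu_{\pi_F}(Y)$ so that $\nu_{\pi_F}(\pi_F^{c(Y)} Y)$ is the constant $d_{V_0} - \nu_{\pi_F}(Z_0)$, I would unpack when the map $Z \mapsto \psi(\langle Z, \pi_F^{c(Y)} Y\rangle/\pi_F)$ descends to a character of $V_0 \otimes \k_F = V_0/\pi_F V_0$ (precisely when $\pi_F^{c(Y)} Y \in L$) and when the descended character is nontrivial (precisely when $\pi_F^{c(Y)-1} Y \notin L$); their conjunction gives the clean reformulation that $Y \in U_{Z_0}$ iff $\pi_F^{c(Y)} Y \in L \setminus \pi_F L$.

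From this reformulation openness is transparent. Decompose $V(F)\setminus\{0\}$ into the open level sets $\Omega_n := \{Y : \nu_{\pi_F}(Y) = n\}$; on each $\Omega_n$ the integer $c(Y)$ is constant, so $Y \mapsto \pi_F^{c(Y)} Y$ restricts to a homeomorphism $\Omega_n \to \Omega_{d_{V_0} - \nu_{\pi_F}(Z_0)}$ pulling back the open subset $L \setminus \pi_F L$. For non-emptiness, the definition of $d_{V_0}$ furnishes some $W_0 \in L$ with $\nu_{\pi_F}(W_0) = d_{V_0}$, which must in fact lie in $L \setminus \pi_F L$ (otherwise $\pi_F^{-1} W_0 \in L$ would contradict the minimality of $d_{V_0}$), and a suitable $\pi_F$-rescaling exhibits an explicit element of $U_{Z_0}$.

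The measure-zero claim for the complement I would handle level set by level set. Within the sphere $\Omega_{d_{V_0} - \nu_{\pi_F}(Z_0)}$, the forbidden subset $\pi_F L \cup (V(F) \setminus L)$ cuts out a finite union of cosets of the proper sublattice $\pi_F L$, hence a subset of Haar measure zero in the sphere. Summing the null contribution over the countably many levels $n$ shows that the complement of $U_{Z_0}$ in $V(F)$ is null. The delicate step is precisely this last verification: one must ensure that neither $\pi_F L$ nor $V(F) \setminus L$ swallows the entire sphere, which rests on the non-degeneracy of the pairing and the strict inclusion $\pi_F L \subsetneq L$ enforced by the extremality built into $d_{V_0}$.
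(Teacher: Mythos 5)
Your reformulation of $U_{Z_0}$ via the $\cO_F$-dual lattice $L$ of $V_0$ is correct and clean: the character on $V_0\otimes \k_F$ is well defined iff $\pi_F^{c(Y)}Y\in L$ and nontrivial iff $\pi_F^{c(Y)}Y\notin\pi_F L$, so $Y\in U_{Z_0}$ iff $\pi_F^{c(Y)}Y\in L\setminus\pi_F L$. The openness argument via the homeomorphisms $\Omega_n\to\Omega_{d_{V_0}-\nu_{\pi_F}(Z_0)}$ and the extremal element $W_0\in L$ for non-emptiness are both fine (at least in the case $\nu_{\pi_F}(Z_0)=0$, which is the one actually used in the paper). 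The paper does not introduce $L$ explicitly; it works directly with the pairing, exploits the scale invariance of $U_{Z_0}$ to reduce to $\nu_{\pi_F}(Y)=0$, and then inspects the reduction of the complement modulo $\pi_F V(\cO_F)$ --- but the skeleton (scale invariance plus a residue computation on one sphere) is the same as yours.

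The genuine gap is in the measure-zero step, which is where the content of the lemma lives. You claim that inside the sphere $\Omega_{d_{V_0}-\nu_{\pi_F}(Z_0)}$ the forbidden set $\pi_F L\cup(V(F)\setminus L)$ "cuts out a finite union of cosets of the proper sublattice $\pi_F L$, hence a subset of Haar measure zero." That inference fails: $\pi_F L$ has finite index $|\k_F|^{\dim V}$ in $L$, so a finite union of its cosets has \emph{positive} Haar measure, never zero. The description is also off in detail: every vector of $\pi_F L$ has valuation $\geq d_{V_0}+1$, so (when $\nu_{\pi_F}(Z_0)=0$) the intersection $\pi_F L\cap\Omega_{d_{V_0}}$ is actually empty, and what remains, $\Omega_{d_{V_0}}\setminus L$, is the complement of a lattice inside a valuation sphere --- not a union of cosets of $\pi_F L$, and of positive measure whenever the elementary divisors of $L$ relative to $V(\cO_F)$ are not all equal. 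Your closing remark that "neither $\pi_F L$ nor $V(F)\setminus L$ swallows the entire sphere" only establishes that the complement of $U_{Z_0}$ is a \emph{proper} subset, which is far weaker than being null. This is precisely where the paper's own argument (that the reduction of $U_0^c$ to $V(\k_F)$ is not surjective) is also thin, so you should treat this step as the one actually requiring a real argument, not a one-line coset count.
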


\begin{proof}[Proof of Lemma \ref{lemma-non-empty-open-subset}]
The non-emptiness of $U_{Z_0}$ follows from the definition of $d_{V_0}$.
By definition, if $Y\notin U_{Z_0}$, then $\langle Z, \pi^{d_{V_0}-\v_{\pi}(Y)-\v_{\pi}(Z)}Y\rangle \in \pi\cO_F$ for all $Z\in V_0$.
Let $U^c$ be the complement of $U_{Z_0}$ in $V(F)$ and $U^c_n=U^c\cap \pi^nV(\cO_F)$.
 It suffices to show that for any $n>0$, the image of $U_n^c$ in $\pi^nV(\cO_F)/\pi^{n+1}V(\cO_F)$ is not surjective.
In fact, let $Y\in U_n^c\backslash U_{n+1}^c$, then $\v_{\pi}(Y)=n$, but by definition $Y_0=\pi^{-n}Y\in U_0^c$, hence 
we only need to show that the image of $U_0^c$ in $V(\k_F)$ is not surjective. But it readily follows from the definition
there exists $Y\in V(F), Z\in V_0$ such that $\v_{\pi}(Y)=0$ and $\langle Z, \pi^{d_{V_0}-\v_{\pi}(Z_0)}Y \rangle\in \cO_F\backslash \pi\cO_F$.
Hence the projection of $U_0^c$ is not surjective to $V(\k_F)$.
\end{proof}

\begin{lemma}\label{lemma-summation-vanishing-character}
We keep the notations in Lemma \ref{lemma-non-empty-open-subset}. Then for $Y\in U_{Z_0}$, we have
\[
\sum_{Z\in V_0\otimes \k_F}\psi(\frac{\langle Z,  \pi^{d_{V_0}-\v_{\pi}(Z_0)-\v_{\pi}(Y)}Y\rangle}{\pi})=0.
\]
\end{lemma}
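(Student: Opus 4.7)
The proof is a one-line application of character orthogonality, once one verifies that the summand is in fact a nontrivial character on the finite abelian group $V_0\otimes \k_F$. Recall that for any finite abelian group $G$ and any nontrivial group homomorphism $\chi\colon G\to \C^\times$, one has $\sum_{g\in G}\chi(g)=0$. My plan is to take $G=V_0\otimes \k_F$ and to identify the function
\[
\chi_Y(Z):=\psi\!\left(\frac{\langle Z,\; \pi_F^{\,d_{V_0}-\nu_{\pi_F}(Z_0)-\nu_{\pi_F}(Y)}Y\rangle}{\pi_F}\right)
\]
with a character of this group whose nontriviality is exactly the condition defining $U_{Z_0}$.

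\textbf{Step 1: well-definedness as a character of $V_0\otimes \k_F$.} Since $V_0$ is the $\cO_F$-module generated by the compact orbit $H(\cO_F).Z_0$, it is finitely generated over $\cO_F$, so $V_0\otimes\k_F$ is a finite abelian group. Additivity of $\chi_Y$ in $Z$ is immediate from the bilinearity of $\langle ,\rangle$ and the fact that $\psi$ is a group homomorphism $F\to\C^\times$. To see that $\chi_Y$ factors through $V_0/\pi_F V_0=V_0\otimes\k_F$, I note that for $Z'\in\pi_F V_0$, write $Z'=\pi_F Z''$ with $Z''\in V_0$. Then the argument of $\psi$ becomes $\langle Z'', \pi_F^{d_{V_0}-\nu_{\pi_F}(Z_0)-\nu_{\pi_F}(Y)}Y\rangle$, and by the definition of $d_{V_0}$ (together with an argument analogous to Lemma \ref{lemma-depth-imply-integrality}) this pairing lies in $\cO_F$, on which $\psi$ is trivial.

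\textbf{Step 2: nontriviality and conclusion.} This is now automatic: by the very definition of $U_{Z_0}$ in Lemma \ref{lemma-non-empty-open-subset}, the hypothesis $Y\in U_{Z_0}$ says precisely that $\chi_Y\colon V_0\otimes\k_F\to\C^\times$ is a nontrivial character. Applying the orthogonality relation to $\chi_Y$ on $G=V_0\otimes\k_F$ yields
\[
\sum_{Z\in V_0\otimes\k_F}\chi_Y(Z)=0,
\]
which is exactly the claimed identity.

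\textbf{Anticipated obstacle.} The statement itself is essentially a tautological restatement of character orthogonality once the setup of Lemma \ref{lemma-non-empty-open-subset} is digested, so the only genuine thing to verify is Step 1, namely that $\chi_Y$ truly descends to $V_0\otimes\k_F$. The scaling factor $\pi_F^{\,d_{V_0}-\nu_{\pi_F}(Z_0)-\nu_{\pi_F}(Y)}$ is engineered precisely so that the pairing lands in $\pi_F^{-1}\cO_F$ modulo $\cO_F$ for all $Z\in V_0$, which is what makes the formula above a legitimate character on the quotient. Once this bookkeeping is done, there is no further difficulty.
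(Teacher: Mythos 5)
Your proposal is correct and follows essentially the same route as the paper: identify the summand as a nontrivial character of the finite abelian group $V_0\otimes\k_F$ (nontriviality being precisely the defining condition of $U_{Z_0}$) and invoke orthogonality. The only cosmetic difference is that the paper verifies the orthogonality on the spot via a splitting $V_0\otimes\k_F = V'\oplus\k_F Z$ with $\chi(Z)\neq 1$, whereas you cite the standard fact and instead make explicit the well-definedness of the character on the quotient $V_0\otimes\k_F$, a point the paper leaves implicit in the statement of Lemma \ref{lemma-non-empty-open-subset}.
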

\begin{proof}[Proof of Lemma \ref{lemma-summation-vanishing-character}]
Note that $\psi(\frac{\langle Z,  \pi^{d_{V_0}-\v_{\pi}(Z_0)-\v_{\pi}(Y)}Y\rangle}{\pi})$ is a non-trivial character on $V_0\otimes \k_F$.
But for any nontrivial character $\chi': V_0\otimes \k_F\rightarrow \C^\times$, assume that $\chi'(Z)\neq 1$ and $V_0\otimes \k_F=V'\oplus \k_F Z$, then
\[
\sum_{X\in V_0\otimes \k_F}\chi'(X)=\sum_{Z'\in V'}\chi'(Z')\sum_{a\in \k_F}\chi'(Z)^a=0.
\]
\end{proof}

\begin{lemma}\label{lem-compactness-result}
Let $W_n(X)=\pi^{-n}V(\cO_F)\cap \sO_G(X)$, then
$W_n( X)\backslash W_{n-1}(X)$ is a compact subset of $\sO_G(X)$.
\end{lemma}

\begin{proof}
We regard $V(F)$ as a metric space with respect to the norm in definition \ref{def-valuation-norm}, i.e.,
\[
|Y|:=q^{\v_{\pi}(Y)}.
\]
It follows from the completeness of $F$ that $V(F)$ with respect to this metric is complete. Note that by our assumption $G$ has finite center, hence 
the algebraic character in (\ref{eqn-disc-equation}) restricting to $G$ is trivial, hence for any $g\in G$,
\[
P(gY)=\nu^{-1}(g)P(Y)=P(Y).
\]
Now assume that $g_i\in G(i=1,2,\cdots)$ with $g_i(X)\in W_n( X)\backslash W_{n-1}(X)$ such that 
$Y_0=\lim_{i}g_i(X)$ in $V(F)$. In particular, we have
\[
P(Y_0)=\lim_{i}P(g_i(X))=P(X)\neq 0.
\]
Therefore we have $Y_0\in V_P$. By assumption $\cO_G(X)$ is closed in $V_P$ hence we must have $Y_0\in \cO_G(X)$.
It is also clear that $Y_0\in \pi^{-n}V(\cO_F)\backslash \pi^{-n+1}V(\cO_F)$, which implies that $Y_0\in W_n( X)\backslash W_{n-1}(X)$.
It follows then $W_n( X)\backslash W_{n-1}(X)$ is actually closed in $V$.
Since it is also bounded, it is compact.
\end{proof}

\begin{lemma}\label{lemma-kernel-constancy-well-define}
We consider the sequence of functions 
\[
f_n:=1_{\pi^{-n}V(\cO_F)}, \quad n\in \Z.
\]

And denote for $\rm{Re}(s)>0$,
\[
I_{n}(Y):=\int_{\sO_G(X)}f_n(Z)|P(Z)|^s\psi(\langle Z, Y \rangle) d Z.
\]
Then there exists an open dense subset $U$ of $V(F)$ such that $V(F)\backslash U$ is of measure zero and for any $Y\in U$,
the sequence $\{I_{n}(Y)| n\in \Z\}$ stabilizes as $n\geq \v_{\pi}(Y)+3$. Consequently, 
the sequence $\{I_{n}(Y)| n\in \Z\}$ convereges to a locally constant function $I_{\infty}(Y)$ on $U$. 
Furthermore,  the function $I_\infty(Y)$ 
is a rational function in $q^{-s}$ with order of poles bounded by constant depending only on $P(X)$ and $\dim(\cO_G(X))$.
\end{lemma}

\remk The example below with $H=\GL_n$ shows that the restriction to $U$ is necessary.

\begin{proof}[Proof of Lemma \ref{lemma-kernel-constancy-well-define}]

Let $W_n(X)=\pi^{-n}V(\cO_F)\cap \sO_G(X)$, then
\begin{align*}
I_n(Y)&=\int_{W_n(X)}|P(Z)|^{s}\psi(\langle Z, Y\rangle)dZ\\
         &=\int_{W_n( X)\backslash W_{n-1}(X)}|P(Z)|^{s}\psi(\langle Z, Y\rangle)dZ+ \int_{W_{n-1}(X)}|P(Z)|^{s}\psi(\langle Z, Y\rangle)dZ\\
         &=I_{n-1}(Y)+\int_{W_n( X)\backslash W_{n-1}(X)}|P(Z)|^{s}\psi(\langle Z, Y\rangle)dZ.
\end{align*}
Therefore, we have
\[
I_n(Y)-I_{n-1}(Y)=\int_{W_n( X)\backslash W_{n-1}(X)}|P(Z)|^{s}\psi(\langle Z, Y\rangle)dZ.
\]
It suffices to show that for $n\geq \v_{\pi}(Y)+3$, 
\begin{equation}\label{eqn-fundamental-vanishing}
\int_{W_n( X)\backslash W_{n-1}(X)}|P(Z)|^{s}\psi(\langle Z, Y\rangle)dZ=0.
\end{equation}
Note that $W_n( X)\backslash W_{n-1}(X)$ lies in the $G(F)$ orbit $\sO_G(X)$ and is a compact subset by Lemma \ref{lem-compactness-result}.

Therefore it decomposes into finitely many $G(\cO_F)$ orbits:
\[
W_n( X)\backslash W_{n-1}(X):=\coprod_{i=1}^r G(\cO_F).Z_i
\]
with $D=\{Z_1, \cdots, Z_r\}$ a set of representatives. Hence it suffices to show that 
\[
\int_{G(\cO_F).Z_i}|P(Z)|^s\psi(\langle Z, Y\rangle) dZ=0, \text{ for  } n\gg0.
\]
Note that for $g\in G(\cO_F)$, we have 
\[
|P(gZ_i)|=|\nu(g)| |P(Z_i)|=|P(Z_i)|
\]
since $\nu$ is an algebraic character. Hence 
\begin{align*}
\int_{G(\cO_F).Z_i}|P(Z)|^s\psi(\langle Z, Y\rangle) dZ&=c_i\sum_{h\in G(\cO_F/\pi^n\cO_F)}\psi(\langle hZ_i, Y\rangle)
\end{align*}
with $$c_i=\frac{\vol(\ker(G(\cO_F)\rightarrow G(\cO_F/\pi^n\cO_F)))|P(Z_i)|^s}{|\im\{G^{i}\rightarrow G(\cO_F/\pi^n\cO_F)\}| },$$ 
where 
$G^{i}$ is the stabilizer of $Z_i$ in $G(\cO_F)$. Assume that $c_i\neq 0$ for some $i$ otherwise we are done. 
Let $Y_1=\pi^{-\v_{\pi}(Y)}Y$. Now for each such $i$,  
\[
\sum_{h\in G(\cO_F/\pi^n\cO_F)}\psi(\langle hZ_i, Y\rangle)=\sum_{h\in G(\cO_F/\pi^n\cO_F)}\psi(\pi^{-\v_{\pi}(Y)}\langle hZ_i, Y_1\rangle).
\]
We claim that 
\begin{equation}\label{eqn-vanishing-summation-character}
\sum_{h\in G(\cO_F/\pi^n\cO_F)}\psi(\langle hZ_i, Y\rangle)=0,\quad \forall n\geq \v_{\pi}(Y)+3.
\end{equation}
Upon replacing $Y$ by $Y_1$ above, we only need to show that for $\v_{\pi}(Y)=0$, 
\[
\sum_{h\in G(\cO_F/\pi^n\cO_F)}\psi(\langle hZ_i,  Y\rangle)=0,\quad \forall n\geq 3.
\]
Recall that by our assumption, for any $Y\in V(\cO_F)$, $\langle Z, Y \rangle \in\cO_F$ holds for any $Z\in V(\cO_F)$(cf. Lemma \ref{lemma-depth-imply-integrality}).
Let $G_n=\ker(G(\cO_F)\rightarrow G(\cO_F/\pi^n\cO_F))$ and $G_{n, n-1}$ the image of $G_{n-1}$ in $G(\cO_F/\pi^n\cO_F)$ under the natural projection.
The key observation is that $G_{n, n-1}$ is a $\k_F$-vector space which is isomorphic to the Lie algebra $\kg(\k_F)$ via $\Id+\pi^{n-1}h\mapsto h$.  
We further identify $G(\cO_F/\pi^{n-1}\cO_F)$ as a subset of $G(\cO_F/\pi^{n}\cO_F)$ by taking a section. Now
\begin{align*}
&\sum_{h\in G(\cO_F/\pi^n\cO_F)}\psi(\langle hZ_i,  Y\rangle)\\
&=\sum_{h_1\in G(\cO_F/\pi^{n-1}\cO_F)}\sum_{h_2\in \kg(\k_F)}\psi(\langle h_1(1+\pi^{n-1}h_2)Z_i,  Y\rangle)\\
&=\sum_{h_1\in G(\cO_F/\pi^{n-1}\cO_F)}\psi(\langle h_1Z_i,  Y\rangle)\sum_{h_2\in \kg(\k_F)}\psi(\langle h_2(\pi^{n-1}h_1^{-1}Z_i),  Y\rangle).
\end{align*}
Note that since $\pi^{n-1}h_1^{-1}Z_i\in W_{1}(X)$, the function $h\mapsto \psi(\langle h(\pi^{n-1}h_1^{-1}Z_i),  Y\rangle$ is well defined on $\kg(\k_F)$.
Now apply Lemma\ref{lemma-summation-vanishing-character}, we know that there is 
an open dense subset $U_{Z_i}$ of $V(F)$ such that 
$V(F)\backslash U_{Z_i}$ is of measure zero and 
$\psi(\langle\cdot,  Y\rangle)$ is non-trivial as a character of $\kg(\k_F)(\pi^{n-1}h_1^{-1}Z_i)$ for each $h_1\in G(\cO_F/\pi^{n-1}\cO_F)$, where 
$\kg(\k_F)(\pi^{n-1}h_1^{-1}Z_i)$ is considered as a subspace of the $\k_F$ vector space generated by $\pi^{-1}V(\cO_F)/V(\cO_F)$.
As a consequence, we have
\[
\sum_{h_2\in \kg(\k_F)}\psi(\langle h_2 (\pi^{n-1}h_1^{-1}Z_i),  Y\rangle)=0, \quad \forall h_1\in G(\cO_F/\pi^{n-1}\cO_F).
\]
Here we are summing over $ \kg(\k_F)$ instead of its subspace $\kg(\k_F)(\pi^{n-1}h_1^{-1}Z_i)$ hence the vanishing of summation over the latter implies
the former.
Let $U=\cap U_{Z_i}$, then  for $Y\in U$, we have 
\[
\int_{W_n( X)\backslash W_{n-1}(X)}|P(Z)|^{s}\psi(\langle Z, Y\rangle)dZ=0.
\]
which finishes the proof of the claim (\ref{eqn-vanishing-summation-character}).  Now for $Y\in U$, we have 
\[
I_\infty(Y)=\sum_{n=\v_{\pi}(Y)+1}^{\v_{\pi}(Y)+2}\int_{W_{n}(X)\backslash W_{n-1}(X)}|P(Z)|^s\psi(\langle Z, Y\rangle) dZ+I_{\v_{\pi}(Y)}(Y).
\]
Furthermore, 
\begin{align*}
I_{\v_{\pi}(Y)}(Y)&=(\nu_s\delta)^{-\v_{\pi}(Y)}(\pi)\int_{W_0(\pi^{\v_{\pi}(Y)}X)}|P(Z)|^s\psi(\frac{\langle Z, Y\rangle}{\pi^{\v_{\pi}(Y)}}) dZ\\
    &=(\nu_s\delta)^{-\v_{\pi}(Y)}(\pi)\int_{W_0(\pi^{\v_{\pi}(Y)}X)}|P(Z)|^s dZ,
\end{align*}
here we use the assumption that $\langle , \rangle$ is of depth zero. Now 
we apply Lemma \ref{lem-rationality-p-adic-integral} to show that
the integral 
\[
\int_{W_0(\pi^{\v_{\pi}(Y)}X)}|P(Z)|^s dZ
\]
is a rational function in $q^{-s}$ with order of poles bounded by constant depending only on $P(X)$ and $\dim(\cO_G(X))$.
 The last part of the lemma follows.

\end{proof}

\begin{remark}  For $G=\SL_2$ and $V=\sl_2$ the adjoint representation, U. Everling prove that the functions $\{f_n(Y)\}$ stabilize
as $n\rightarrow \infty$ by employing similar arguments from which he deduces the existence of $\kappa_G(X, Y)$(cf. \cite[Proposition 4]{Eve98})
in this case. 
\end{remark}

Let $D$ be a subset of $V(\cO_F/\pi^n\cO_F)(n> 0)$. Let $\Phi_D(X)$ be the characteristic function of the preimage of $D$ under the projection $\varphi_n: V(\cO_F)\rightarrow V(\cO_F/\pi^n\cO_F)$.
Let us determine its Fourier transform. If there is no confusion we will also regard $\Phi_D(X)$ as a function on $V(\cO_F/\pi^{d}\cO_F)$ for $d\geq n$.
As for the Fourier transform, we have 
\begin{align}\label{eqn-Fourier-expression}
\hat{\Phi}_D(Y)=\int_{V(F)} \Phi(X)\psi(\langle X, Y\rangle) dX=e_{d}\sum_{X\in V(\cO_F/\pi^{d}\cO_F)}\Phi_D(X)\psi(\langle X, Y\rangle)
\end{align}
with $e_d=\vol(\pi^{d} V(\cO_F))$ and $d\geq \max\{-\v_{\pi}(Y), n\}$.

\begin{proof}[Proof of Theorem \ref{teo-main-results-kernel-function}]
It is clear from the Lemma \ref{lemma-kernel-constancy-well-define}  that the function $\kappa_G(\nu_s, X, Y)$ is well defined.
It remains to show
right hand side of (\ref{teo-main-equality-Fourier}) is well defined as an absolute convergent integral
for $\Re(s)>0$ and that the two sides of (\ref{teo-main-equality-Fourier}) 
is equal.

Let $d$ be an integer.
Let $W_d(X)=\pi^{-d}V(\cO_F)\cap \sO_G(X)$. Since $\hat{\Phi}_D(Y)$ is compactly supported, we have
\begin{align*}
\Orb_G(X, \hat{\Phi}_D, \nu_s)&=\int_{W_{d}}\hat{\Phi}_D(Z)|P(Z)|^sdZ\\
                 &=e_{d}\int_{W_d}(\sum_{Y\in V(\cO_F/\pi^{d}\cO_F)}\Phi_D(Y)\psi(\langle Y, Z\rangle)|P(Z)|^sdZ\\
                 &=e_{d}\sum_{Y\in V(\cO_F/\pi^{d}\cO_F)}\Phi_D(Y)\int_{W_d(X)}\psi(\langle Y, Z\rangle)|P(Z)|^sdZ\\
                 &=e_{d}\sum_{Y\in V(\cO_F/\pi^{d}\cO_F)}\Phi_D(Y)I_d(Y).
\end{align*}
Let $D=\{a_1, \cdots, a_r\}$ be a subset of $V(\cO_F/\pi^n\cO_F)$. If $D$ does not contain 0, then $\Phi_D(Y)\neq 0$ imply that 
$1\leq \v_{\pi}(Y)\leq n$. Following Lemma \ref{lemma-kernel-constancy-well-define}, 
take $d\geq \max_{Y\in D}\{ \v_\pi(Y)\}+3$, we have 
\[
I_\infty(Y)=I_d(Y)=\kappa_G(\nu_s, X, Y).
\]
Hence 
\[
\Orb_G(X, \hat{\Phi}_D, ,\nu_s)=e_{d}\sum_{Y\in V(\cO_F/\pi^{d}\cO_F)}\Phi_D(Y)I_\infty(Y)=\int_{V(F)}\Phi_D(Y)\kappa_G(\nu_s, X, Y)dY.
\]
It remains to treat the case where $D=\{0\}$, i.e., 
\[
\Phi_D(X)=1_{\pi^nV(\cO_F)}(X).
\]
Let $\delta_1$ be the modulus character of $H(F)$ with respect to the Haar measure on $V$. In this case, we have
\begin{align*}
&\int_{V(F)}\Phi_D(Y)\kappa_G(\nu_s, X, Y)dY\\
&=\int_{\pi^nV(\cO_F)}\kappa_G(\nu_s, X,  Y) dY\\
&=\delta_1(\pi)^n\int_{V(\cO_F)}\kappa_G(\nu_s, X, \pi^n Y) dY\\
&=\sum_{j=0}^{\infty} \delta_1(\pi)^{n+j}\int_{V(\cO_F)\backslash \pi V(\cO_F)}\kappa_G(\nu_s, X, \pi^{n+j} Y) dY,
\end{align*}
and 
\begin{align*}
&\kappa_G(\nu_s, X, \pi^{n+j} Y) \\
&=\int_{\sO_G(X)}|P(Z)|^s\psi(\langle Z, \pi^{n+j} Y\rangle)dZ\\
&=(\delta\nu_{s}(\pi))^{-n-j}\int_{\sO_G(\pi^{n+j}X)}|P(Z)|^s\psi(\langle Z,  Y\rangle)dZ.
\end{align*}
In particular, 
\[
\kappa_G(\nu_s, X, \pi^{n+j} Y)=(\delta\nu_{s}(\pi))^{-n-j}\kappa_G(\nu_s, \pi^{n+j} X,  Y).
\]
Hence, 
\begin{align*}
&\int_{V(F)}\Phi_D(Y)\kappa_G(\nu_s, X, Y)dY\\
&=\sum_{j=0}^{\infty} (\delta\delta_1^{-1}\nu_s(\pi))^{-n-j}\int_{V(\cO_F)\backslash \pi V(\cO_F)}\kappa_G(\nu_s, \pi^{n+j} X, Y) dY.
\end{align*}
This sequence( with $s$ as a complex variable) converges on certain half plane of the complex numbers depending on $|\nu(\pi)|$.
 
For $Y\in V(\cO_F)\backslash \pi V(\cO_F)$, we have $\v_{\pi}(Y)=0$, therefore for $d\geq 3$ we have
\[
\kappa_G(\nu_s, X, Y) =I_{\infty}(Y)=I_{d}(Y)=\int_{W_d(X)}|P(Z)|^s\psi(\langle Z,  Y\rangle)dZ.
\]
Now it comes to the point that we are allowed to interchange the two integrals $\int_{V(\cO_F)\backslash \pi V(\cO_F)}\int_{W_d(\pi^{-n-j}X)}$ by compactness and obtain
\begin{align*}
&\int_{V(F)}\Phi_D(Y)\kappa_G(\nu_s, X, Y)dY\\
&=\sum_{j=0}^{\infty} (\delta\delta_1^{-1}\nu_s(\pi))^{-n-j}\int_{W_d(\pi^{n+j} X)}|P(Z)|^sdZ\int_{V(\cO_F)\backslash \pi V(\cO_F)}\psi(\langle Z,  Y\rangle)dY.
\end{align*}
Now reverse the above process, we obtain 
\begin{align*}
&\int_{V(F)}\Phi_D(Y)\kappa_G(\nu_s, X, Y)dY\\
&=\sum_{j=0}^{\infty} (\delta\delta_1^{-1}\nu_s(\pi))^{-n-j}\int_{W_d(\pi^{n+j} X)}|P(Z)|^sdZ\int_{V(\cO_F)\backslash \pi V(\cO_F)}\psi(\langle Z,  Y\rangle)dY\\
&=\sum_{j=0}^{\infty}\delta_1(\pi)^{n+j}\int_{W_{n+j+d}(X)}|P(Z)|^sdZ\int_{V(\cO_F)\backslash \pi V(\cO_F)}\psi(\langle \pi^{n+j}Z,  Y\rangle)dY.
\end{align*}
We want to take take limit with respect to $d$ but need some justification. Let 
\[
S_d=\sum_{j=0}^{\infty}\delta_1(\pi)^{n+j}\int_{W_{n+j+d}(X)}|P(Z)|^sdZ\int_{V(\cO_F)\backslash \pi V(\cO_F)}\psi(\langle \pi^{n+j}Z,  Y\rangle)dY.
\]
Then for $d_1>d$,
\begin{align*}
&S_{d_1}-S_{d}\\
&=\sum_{j=0}^{\infty}\delta_1(\pi)^{n+j}\int_{W_{n+j+d_1}(X)\backslash W_{n+j+d}(X)}|P(Z)|^sdZ\int_{V(\cO_F)\backslash \pi V(\cO_F)}\psi(\langle \pi^{n+j}Z,  Y\rangle)dY\\
&=\sum_{j=0}^{\infty}\delta_1(\pi)^{n+j}\sum_{k=d}^{d_1}\int_{W_{n+j+k+1}(X)\backslash W_{n+j+k}(X)}|P(Z)|^sdZ\int_{V(\cO_F)\backslash \pi V(\cO_F)}\psi(\langle \pi^{n+j}Z,  Y\rangle)dY\\
&=\sum_{j=0}^{\infty}(\delta\delta_1^{-1}\nu_s(\pi))^{-n-j}\sum_{k=d}^{d_1}\int_{W_{k+1}(\pi^{n+j}X)\backslash W_{k}(\pi^{n+j}X)}|P(Z)|^sdZ\int_{V(\cO_F)\backslash \pi V(\cO_F)}\psi(\langle Z,  Y\rangle)dY.
\end{align*}
From (\ref{eqn-fundamental-vanishing}) and the fact that $\nu_\pi(Y)=0$,
we deduce that for $k\geq 3$, 
\begin{align*}
&\int_{W_{k+1}(\pi^{n+j}X)\backslash W_{k}(\pi^{n+j}X)}|P(Z)|^sdZ\int_{V(\cO_F)\backslash \pi V(\cO_F)}\psi(\langle Z,  Y\rangle)dY\\
&=\int_{V(\cO_F)\backslash \pi V(\cO_F)}(\int_{W_{k+1}(\pi^{n+j}X)\backslash W_{k}(\pi^{n+j}X)}|P(Z)|^s\psi(\langle Z,  Y\rangle)dZ)dY\\
&=0.
\end{align*}

Hence 
\[
S_{d_1}=S_d, \quad \forall d_1>d\geq 3.
\]
Letting $d\rightarrow \infty$ we obtain 
\begin{align*}
&\int_{V(F)}\Phi_D(Y)\kappa(\nu_s, X, Y)dY\\
&=\sum_{j=0}^{\infty}\delta_1(\pi)^{n+j}\int_{\sO_G(X)}|P(Z)|^sdZ\int_{V(\cO_F)\backslash \pi V(\cO_F)}\psi(\langle \pi^{n+j}Z,  Y\rangle)dY.
\end{align*}
Note that $\sum_{j=0}^{\infty}\delta_1(\pi)^{n+j}\int_{V(\cO_F)\backslash \pi V(\cO_F)}\psi(\langle \pi^{n+j}Z,  Y\rangle)dY$ converges absolutely to 
$\int_{\pi^nV(\cO_F)} \psi(\langle Z,  Y\rangle)dY$  because $\delta_1(\pi)<1$. Moreover, the integral
\[
\int_{\sO_G(X)}|P(Z)|^s\int_{\pi^n\cO_F} \psi(\langle Z,  Y\rangle)dY
\]
is absolutely convergent for $\rm{Re}(s)>0$ since $\int_{\pi^n\cO_F} \psi(\langle Z,  Y\rangle)dY$ is of compact support in $Z$ (cf. \cite[Lemma 2]{Igu84}).

Therefore applying  Fubini's theorem, we obtain that the left integral
\[
\int_{V(F)}\Phi_D(Y)\kappa(\nu_s, X, Y)dY=\int_{\sO_G(X)}|P(Z)|^s\int_{\pi^n\cO_F} \psi(\langle Z,  Y\rangle)dY.
\]
is absolutely convergent.

This proves Theorem \ref{teo-main-results-kernel-function} for the functions $\Phi_D$. Observe that the functions 
$\Phi_D$ and their translations under $G(F)$ 
form a basis of the space of compactly supported locally constant functions $\cC_{c}^{\infty}(V)$, we observe that the equality (\ref{teo-main-equality-Fourier})
is $G$-equivariant, hence we are done.

\end{proof}

\begin{proof}[Proof of Theorem \ref{teo-main-results-kernel-function-H}]
Let $X\in V(F)$, then
\[
\sO_{H}(X)=F^\times \sO_{G}(X).
\]
Then
\[
\Orb_H(X, f, \chi\nu_s)=\int_{F^\times }\chi(t) \Orb_G(tX, f, \nu_s) dt.
\]

But by Theorem \ref{teo-main-results-kernel-function}, we have 
\[
\Orb_G(X, \hat{f}, \nu_s)=\int_{V(F)} f(Y)\kappa_G(\nu_s, X, Y)dY.
\]
Hence we obtain
\begin{align*}
\Orb_H(X, \hat{f}, \nu_s)&=\int_{F^\times }\chi(t) \Orb_G(tX, \hat{f}, \nu_s) dt\\
&=\int_{F^\times }\int_{V(F)} f(Y)\kappa_G(\nu_s, tX, Y)\chi(t)dY d^\times t\\
&=\int_{V(F)} f(Y)dY\int_{F^\times }\kappa_G(\nu_s, tX, Y)\chi(t) d^\times t\\
&=\int_{V(F)} f(Y)\kappa_H(\chi\nu_s, X, Y) dY.
\end{align*}
Here we can exchange the order of the integrals because $f\in\cC_{c}^{\infty}(V)$.
In particular, as a distribution we have 
\[
\kappa_H(\chi \nu_s, X, Y) =\int_{F^\times }\kappa_G(\nu_s, tX, Y)\chi(t) d^\times t.
\]
\end{proof}

\begin{remark}  We will call the function $\kappa_H(\nu_s, X, Y)$ and orbital Gauss function.
In special cases, this function has been studied by T. Taniguchi and F. Thorne \cite{TaF13}.
\end{remark} 

Let us consider some examples. For $H=GL_1$ and $V=F$. Then $H$ acts on $V$ by scalar multiplication. 
Let $P(X)=X$.
There are two orbits in this case: $V_0=\{0\}$ and $V_1=F^\times$. The second orbit is open.  

Now let us consider the sequence of functions $f_n=1_{p^{-n}\Z_p}$ then the sequence 
$\{f_n\}_{n=1}^{\infty}$ converge to $1$ in the space of distributions $\cD(V)$ on $V$.  Now for $X\in V_1$, 
in $\cD(V)$ we have 
\begin{align}\label{eqn-computation-orbital-integral-GL_1}
\kappa_H(\nu_s, X, Y)&=\int_{F^\times} |Z|^s\psi(ZY)dZ\\
&=\lim_{n} \int_{F^\times} |Z|^sf_n(Z)\psi(ZY)dZ\\
&=|Y|^{-s-1}\gamma_p(-s)
\end{align}
 where 
 \[
 \gamma_p(s)=\frac{L_p(1-s)}{L_p(s)}=\frac{1-p^{-s}}{1-p^{s-1}}.
 \]
 Similar idea was used by Beineke and Bump in computing the Fourier transform of $g\mapsto |\det(g)|^s$(\cite[\S 6]{Bump06} ).
 This also follows from the functional equation for $H=\GL_1$. In fact consider the zeta integral for the trivial representation we have
\[
Z(\Phi, s)=\int_{F^\times}\Phi(X) |X|^{s-1}dX,
\]
now local function equation tells us
\[
Z(\hat{\Phi}, 1-s)=\gamma(s)Z(\Phi, s).
\]
Note that the definition of $\kappa_H(\nu_s, X, Y)$ extends to $Y=0$. Note that in this case if $s=0$ the function $\kappa_H(\nu_s, X, Y)=0$, hence our 
orbital integral will vanish identically(in distributional sense).

 Generally, for $H=GL_n\times GL_n$ and $V=\gl_n$ with the action
 \[
 ((g_1, g_2), X)\mapsto g_1Xg_2^{-1}.
 \]
 We take $P(X)=\det(X)$. We have finitely many orbits, indexing by the rank of $X$.
 Let us consider $X=\Id$, a point on the open orbit. As before let us take $f_n=1_{p^{-n}\gl_n(\Z_p)}$, then
 $f_n\rightarrow1(n\rightarrow \infty)$ in distribution sense. Let us consider the function
 \begin{align*}
 \kappa_H(\nu_s, X, Y)&=\int_{GL_n(F)} |\det(Z)|^s\psi(\Tr(ZY))dZ\\
 &=\lim_{n}\int_{GL_n(F)} f_n(X)|\det(X)|^s\psi(\Tr(ZY))dZ.
 \end{align*}
 As before, our computation shows that 
 \[
 \kappa_H(\nu_s, X, Y)=|\det(Y)|^{-s-n}\gamma(-s), \quad \gamma(s)=\frac{L_p(n-s)}{L_p(s)},
 \]
 where $L_p(s)=\frac{1}{1-p^{-s}}$.
 As before, this also follows from the local functional equation for the trivial representation of $\GL_n$.
 \\
 \section{Application of the Main Results}
 
Let us explain how to derive the theorem of Harish-Chandra from our main results. 
Let $G$ be a semi-simple group and $\kg$ its Lie algebra. We consider the adjoint representation
\[
\Ad: G\rightarrow \Aut(\kg).
\]
In particular, we have $H=\Ad(G)\G_m$, where $\G_m$ is regarded as the center of $\Aut(\kg)$.
We also fix an $G$-invariant symmetric bilinear form $\langle , \rangle$ on $\kg$.
When $\kg$ is a classical Lie algebra of type $A, B, C, D$, which can be realized in a standard way as 
a sub Lie algebra of the Lie algebra $\gl_n$(cf. \cite[Chapter 2]{Hum12}). Then $\langle , \rangle$ is simply the restriction of 
the trace pairing
\[
(X, Y)\mapsto \Tr(XY).
\]
Furthermore, following \cite[Page 2]{De99}, we consider the invariant polynomial $P(X)$ as the coefficient of $t^\ell$ in 
\[
\det(t-\ad(X)), \quad \ell=\rank(\kg).
\]
 In general the algebra of invariants $S(\kg)^G$ is identified by Chevalley with $S(\kg)^W$ with $W$ the Weyl group.
 Note that by a theorem of Deligne and Rao \cite[Theorem 1]{Rao72}, the integral 
 \[
 \Orb_G(Y, f)=\int_{\sO_G(Y)}f(X)dX
 \]
 is well defined for any $f\in \cC_c^\infty(\kg)$ and $X\in\{X\in \kg: P(X)\neq 0\}$. As a consequence, we can take $s=0$ in the 
 equation (\ref{teo-main-equality-Fourier}) and recover the theorem of Harish-Chandra.
 
\begin{remark}  Similar considerations can be applied to the case of representability for the Jacquet–Rallis orbital integral in \cite{Zhang12} once the analogue 
 of theorem of Deligne and Rao is established, which is an interesting question.
 \end{remark}

 \par \vskip 1pc
{\bf Acknowledgement.}
The work is done when the author is a postdoc at Yau Mathematical Sciences Center
of Tsinghua University. He wants to thank their hospitality. He also would like to thank Bin Xu for a careful reading of the first version of the paper.
He also would like to thank the referee for pointing out a mistake in using a theorem of Igusa on rationality of orbital integrals.

\bibliographystyle{plain}
\bibliography{biblio}

\begin{thebibliography}{10}

\bibitem{AM96}
Magdy Assem.
\newblock A note on rationality of orbital integrals on a {$p$}-adic group.
\newblock {\em manuscripta mathematica}, 89:267--279, 1996.

\bibitem{Bump06}
Jennifer Beineke and Daniel Bump.
\newblock A summation formula for divisor functions associated to lattices.
\newblock 2006.

\bibitem{CD08}
Raf Cluckers and Jan Denef.
\newblock Orbital integrals for linear groups.
\newblock {\em Journal of the Institute of Mathematics of Jussieu},
  7(2):269--289, 2008.

\bibitem{CL08}
Raf Cluckers and Eva Leenknegt.
\newblock Rectilinearization of semi-algebraic {$p$}-adic sets and {D}enef's
  rationality of {P}oincar{\'e} series.
\newblock {\em Journal of Number Theory}, 128(7):2185--2197, 2008.

\bibitem{De99}
Stephen DeBacker, Paul~J Sally, et~al.
\newblock {\em Admissible Invariant Distributions on Reductive $ p $-adic
  Groups}.
\newblock Number~16. American Mathematical Soc., 1999.

\bibitem{Denef84}
Jan Denef.
\newblock The rationality of the {P}oincar{\'e} series associated to the
  {$p$}-adic points on a variety.
\newblock {\em Inventiones mathematicae}, 77:1--23, 1984.

\bibitem{Eve98}
Ulrich Everling.
\newblock An example of {F}ourier transforms of orbital integrals and their
  endoscopic transfer.
\newblock {\em New York J. Math}, 4(17):29, 1998.

\bibitem{HS17}
Jiuzu Hong and Binyong Sun.
\newblock Generalized semi-invariant distributions on {$p$}-adic spaces.
\newblock {\em Mathematische Annalen}, 367:1727--1776, 2017.

\bibitem{Hum12}
James~E Humphreys.
\newblock {\em Introduction to Lie algebras and representation theory},
  volume~9.
\newblock Springer Science \& Business Media, 2012.

\bibitem{Igu84}
Jun-ichi Igusa.
\newblock Some results on {$p$}-adic complex powers.
\newblock {\em American Journal of Mathematics}, 106(5):1013--1032, 1984.

\bibitem{Pra15}
D~Prasad.
\newblock A relative local langlands correspondence, arxiv preprint 2015.
\newblock {\em arXiv preprint arXiv:1512.04347}.

\bibitem{Rao72}
R~Ranga Rao.
\newblock Orbital integrals in reductive groups.
\newblock {\em Annals of Mathematics}, 96(3):505--510, 1972.

\bibitem{TaF13}
Takashi Taniguchi and Frank Thorne.
\newblock Orbital-functions for the space of binary cubic forms.
\newblock {\em Canadian Journal of Mathematics}, 65(6):1320--1383, 2013.

\bibitem{Yun13}
Zhiwei Yun.
\newblock Orbital integrals and {D}edekind zeta functions.
\newblock {\em arXiv preprint arXiv:1303.2420}, 2013.

\bibitem{Zhang12}
Wei Zhang.
\newblock Harmonic analysis for relative trace formula.
\newblock {\em Automorphic Representations and L-functions, edited by: D.
  Prasad, CS Rajan, A. Sankaranarayanan, and J. Sengupta, Tata Institute of
  Fundamental Research, Mumbai, India}, 2012.

\end{thebibliography}


\end{document}